\renewcommand{\[}{\begin{equation}}
\renewcommand{\]}{\end{equation}}
\newtheorem{proposition}{\sc Proposition}[section]
\newtheorem{theorem}[proposition]{\sc Theorem}
\newtheorem{conjecture}[proposition]{\sc Conjecture}
\theoremstyle{definition}
\theoremstyle{remark}
\renewcommand{\phi}{\varphi}
\renewcommand{\epsilon}{\varepsilon}
\def\C{{\mathbb C}}
\def\N{{\mathbb N}}
\def\R{{\mathbb R}}
\def\z2{{\mathbb Z}_2}
\def\id{{\rm id}}
\def\half{{\mbox{$\frac{1}{2}$}}}
\def\half{{\mbox{$\frac{1}{2}$}}}
\begin{document}
\baselineskip=16pt

\author{\vspace*{5mm}Ludwik D\k abrowski}
\address{
SISSA (Scuola Internazionale Superiore di Studi Avanzati)\\
Via Bonomea 265, 34136 Trieste, Italy
}
\email{dabrow@sissa.it}
\title[Towards a noncommutative Brouwer fixed-point theorem]
{\large 
Towards a noncommutative\\
\vspace*{2.5mm}
Brouwer fixed-point theorem \\}

\begin{abstract}
\vspace*{5mm}
We present some results and conjectures on a generalization to the noncommutative setup 
of the Brouwer fixed-point theorem from the Borsuk-Ulam theorem perspective.
\end{abstract}
\vspace*{5mm}
\maketitle

\section{Introduction}
\noindent
The Borsuk-Ulam theorem \cite{b-k33}, a fundamental theorem of topology, is often formulated in one of three equivalent versions,
that regard continuous maps 
whose either domain, or codomain, are spheres. 
\begin{theorem}
\label{BUT}
For any $n\in\N$ let $\sigma: x\mapsto -x$ be the antipodal involution of $S^n$. Then:
\item{(0).}
If $F\colon S^n\to\mathbb{R}^n$
is continuous, then there exists 
$x\in S^n$, such that $F(x)=F(\sigma(x))$.
\item{(i).}
There is no
continuous  map ${f}\colon S^n\to S^{n-1}$, 
such that $ f\circ \sigma = \sigma\circ f$.
\item{(ii).}
There is no continuous map $g : B^n \to S^{n-1}$, such that $ g\circ \sigma (x) = \sigma\circ g (x)$ for all 
$x\in \partial B^n = S^{n-1}$.
\item{} The statements (0), (i) and (ii) are equivalent. $\diamond$
\end{theorem}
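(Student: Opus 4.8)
The plan is to prove the three-way equivalence by closing the cycle of implications $(0)\Rightarrow(\mathrm i)\Rightarrow(\mathrm{ii})\Rightarrow(0)$, with $n\in\N$ fixed throughout. Each arrow is an elementary construction, and it is cleanest to argue by contraposition: assume the target statement fails and produce, by hand, a map witnessing that the source statement fails.

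For $(0)\Rightarrow(\mathrm i)$, suppose $f\colon S^n\to S^{n-1}$ were continuous with $f\circ\sigma=\sigma\circ f$. Composing with the inclusion $S^{n-1}\hookrightarrow\R^n$ gives a continuous $F\colon S^n\to\R^n$, and $(0)$ furnishes $x\in S^n$ with $F(x)=F(\sigma(x))$; equivariance then forces $f(x)=f(\sigma(x))=\sigma(f(x))=-f(x)$, hence $f(x)=0\notin S^{n-1}$, a contradiction. For $(\mathrm i)\Rightarrow(\mathrm{ii})$, suppose $g\colon B^n\to S^{n-1}$ is continuous with $g\circ\sigma=\sigma\circ g$ on $\partial B^n=S^{n-1}$. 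I would realize $S^n=\{(y,t)\in\R^n\times\R:\|y\|^2+t^2=1\}$, so that $\sigma(y,t)=(-y,-t)$ and each closed hemisphere $D_\pm=\{(y,t)\in S^n:\pm t\ge 0\}$ is carried homeomorphically onto $B^n$ by $(y,t)\mapsto y$, in a way that on the equator $\{t=0\}$ reduces to the standard identification of the equator with $\partial B^n$ intertwining the antipodal maps. Define $\tilde f\colon S^n\to S^{n-1}$ by $\tilde f(y,t)=g(y)$ for $t\ge 0$ and $\tilde f(y,t)=-g(-y)$ for $t\le 0$. On the equator the two prescriptions agree \emph{precisely because} $g(-y)=-g(y)$ there, so by the pasting lemma $\tilde f$ is well defined and continuous; and checking the two hemispheres separately gives $\tilde f\circ\sigma=\sigma\circ\tilde f$, contradicting $(\mathrm i)$.

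For $(\mathrm{ii})\Rightarrow(0)$, suppose $F\colon S^n\to\R^n$ were continuous with $F(x)\ne F(\sigma(x))$ for every $x$. Then $G(x)=\bigl(F(x)-F(\sigma(x))\bigr)/\|F(x)-F(\sigma(x))\|$ defines a continuous map $G\colon S^n\to S^{n-1}$ satisfying $G\circ\sigma=-G=\sigma\circ G$. Restricting $G$ to the closed hemisphere $D_+$ and using the identification $D_+\cong B^n$ of the previous paragraph (which on the boundary sphere intertwines the antipodal involutions) yields a continuous $g\colon B^n\to S^{n-1}$ that is equivariant on $\partial B^n$, contradicting $(\mathrm{ii})$. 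This closes the cycle, so all three statements are equivalent.

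The hard part is not topological but a matter of sign bookkeeping, and it sits entirely in $(\mathrm i)\Rightarrow(\mathrm{ii})$: the hemisphere-to-ball identifications and the $\pm$ in the definition of $\tilde f$ must be chosen so that well-definedness on the equator uses exactly the hypothesized equivariance of $g$ on $\partial B^n$, while strict equivariance of $\tilde f$ on all of $S^n$ holds on the nose — and a single misplaced sign destroys one of these two requirements. The other two implications are essentially automatic once one notes that $v=-v$ forces $v=0$ in $\R^n$, and that the normalized difference $x\mapsto F(x)-F(\sigma(x))$ is the canonical device for manufacturing an equivariant sphere-valued map out of a coincidence-free map into $\R^n$.
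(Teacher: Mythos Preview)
Your proof is correct and uses essentially the same constructions as the paper. The only differences are organizational: you close the cycle $(0)\Rightarrow(\mathrm i)\Rightarrow(\mathrm{ii})\Rightarrow(0)$, whereas the paper argues $(0)\Leftrightarrow(\mathrm i)$ directly and treats $(\mathrm i)\Leftrightarrow(\mathrm{ii})$ as the special case $X=S^{n-1}$ of Proposition~\ref{xBUT}, phrased in the suspension/cone language $\Sigma X$, $\Gamma X$; your explicit hemisphere formula $\tilde f(y,t)=g(y)$ or $-g(-y)$ is exactly the piecewise definition $f(t,x)=g(t,x)$ or $\sigma\bigl(g(1-t,\sigma(x))\bigr)$ used there, written in coordinates.
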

\noindent
Here $S^n$ is the unit sphere in $\R^{n+1}$ 
and $B^n$ is the unit ball in $\R^{n}$, but  homeomorphic spaces work as well.
The antipodal involution $\sigma: x\mapsto -x$
generates a free action of the group $\z2$ and we call
{\em $\z2$-equivariant} those
maps that commute with $\sigma$.

The Borsuk-Ulam theorem has a large variety of proofs, nowadays usually employing the {\em degree} of a map (the case 
$n = 1$ is easily seen by the intermediate value theorem). 
Here we explain the equivalence of (0) and (i) 
in Theorem \ref{BUT}.
Indeed, the logical negation of (0) would provide a map given by \[
{f}(x):=\frac{F(x)-F(-x)}{\|F(x)-F(-x)\|},
\]
contradicting (i).
Conversely, $f$ viewed as a map into $\R^n$ would provide a counterexample to  Theorem \ref{BUT}\,$(0)$.
Instead the proof of the equivalence of 
(i) and (ii) will be a special instance of the proof we shall give of the more general Proposition \ref{xBUT}.

The Borsuk-Ulam theorem has 
lot of applications to differential equations, combinatorics (e.g.  partitioning,
necklace division), Nash equilibria, and others, see e.g. \cite{m-j03}.

The well known equivalent theorems are 
the Lusternik-Schnirelmann theorem (that at least one 
among n + 1 open or closed sets covering $S^n$   contains a pair of antipodal points), 
and combinatorial Tucker's lemma and Fan's lemma 
\cite{nss}. 

There is also plentiful of corollaries.
Some of them are "fun facts", e.g. 
the case $n = 2$ is often illustrated by saying that at any moment there is always a pair of antipodal points on the Earth's surface with equal temperature and pressure.
But also, that squashing (with folding admitted) a balloon onto the floor there always is a pair of antipodal points one on the top of the other.

A famous corollary known as {\em Ham Sandwich theorem}
(sometimes named {\em Yolk, white and the shell of an egg} theorem) states that for any compact sets 
$V_1,\dots , V_n$ in $\R^n$ we can always find a hyperplane dividing each of them in two subsets of equal volume.
Another impressive implication is that no subset of $\R^n$ is homeomorphic to $S^n$. 
Also the famous Brouwer fixed-point theorem,
which can be formulated in two equivalent versions,
 is a corollary.
\begin{theorem}
\label{Brouwer}
\item{(I).}
A continuous map from the ball $B^n$ to itself has a fixed point.
\item{(II).}
There is no continuous map $g : B^n \to S^{n-1}$ that is the identity on the boundary $\partial B^n = S^{n-1}$.
\item{} The statements (I) and (II) are equivalent and 
are corollary of Theorem \ref{BUT}\,(ii). $\diamond$
\end{theorem}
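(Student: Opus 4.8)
The plan is to prove the two implications between (I) and (II) by contraposition, and then to obtain (II) as an immediate consequence of Theorem~\ref{BUT}(ii); since (I) and (II) will have been shown equivalent, both are then corollaries of (ii).

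First I would show that (II) implies (I). Assume, seeking to contradict (I), that $f\colon B^n\to B^n$ is continuous with $f(x)\neq x$ for every $x$, so that $v(x):=x-f(x)$ never vanishes. Define $g\colon B^n\to S^{n-1}$ by following the ray emanating from $f(x)$ through $x$ until it meets the boundary: set $g(x):=x+t(x)\,v(x)$, where $t(x)\geq 0$ is the value making $\|g(x)\|=1$. Writing $\|x+t\,v(x)\|^2=1$ as a quadratic in $t$ with positive leading coefficient $\|v(x)\|^2$ and constant term $\|x\|^2-1\leq 0$, one checks that it has a unique nonnegative root $t(x)$, which by the quadratic formula depends continuously on $x$ (the nonvanishing of $v$ is what guarantees continuity); when $\|x\|=1$, using $\langle x,f(x)\rangle\leq\|f(x)\|\leq 1$ one finds $t(x)=0$, so $g$ restricts to the identity on $\partial B^n$. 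This contradicts (II).

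Next, (I) implies (II): given a continuous retraction $g\colon B^n\to S^{n-1}$ with $g|_{\partial B^n}=\id$, the assignment $x\mapsto -g(x)$ is a continuous self-map of $B^n$ (its image lies in $S^{n-1}\subset B^n$) with no fixed point, since $-g(x)=x$ would force $x\in S^{n-1}$, hence $g(x)=x$ and $x=-x$, which is impossible. This contradicts (I). Finally, to deduce (II) from Theorem~\ref{BUT}(ii): were $g\colon B^n\to S^{n-1}$ continuous and equal to the identity on $\partial B^n=S^{n-1}$, then for every $x\in\partial B^n$ one would have $g(\sigma(x))=g(-x)=-x=\sigma(x)=\sigma(g(x))$, so $g$ is $\z2$-equivariant on the boundary, which is exactly the situation ruled out by (ii). Hence (II) holds, and with it (I).

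The only step that is not purely formal is the construction of the retraction in the passage from (II) to (I): I expect the one (mild) difficulty to be checking that the ray-casting map $g$ is well defined --- existence and uniqueness of the nonnegative parameter $t(x)$ --- that it is continuous, and that it collapses to the identity on the boundary. The other implications are short manipulations with the antipodal involution and involve no estimates.
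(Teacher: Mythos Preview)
Your proposal is correct and follows essentially the same route as the paper: both directions of the equivalence are proved by contraposition via the antipodal composition $x\mapsto -g(x)$ and the standard ray-casting retraction, and (II) is then reduced to Theorem~\ref{BUT}(ii) by observing that the identity on $\partial B^n$ is trivially $\z2$-equivariant. Your treatment of the quadratic for $t(x)$ and the boundary check via $\langle x,f(x)\rangle\le\|f(x)\|\le 1$ is more detailed than the paper's one-line description of the construction, but the argument is the same.
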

\vspace*{-4mm}
\begin{proof}
\noindent
We show the equivalence of  (I) and (II), 
by showing the equivalence of their logical negations.\\
If there is $g : B^n \to S^{n-1}$ such that $g|_{S^{n-1}}=\id$, then,
$\sigma\circ g$ would have no fixed point,
which shows (I)$\Rightarrow$(II). \\
Next, assume there exists $h:B^n\to B^n$ such that $h(x)\neq x$, $\forall x\in B^n$. Then $g(x)$, defined as the intersection point of the half-line passing from $h(x)$ through $x$ with $\partial B^n = S^{n-1}$,
would define a continuous map $g : B^n \to S^{n-1}$ such that $g|_{S^{n-1}}=\id$, which shows (II)$\Rightarrow$(I). \\
 Finally note that $g$ as above is in particular 
$\z2$ equivarant on ${S^{n-1}}$, 
which shows (a contrary) that (II) (and thus also (I))
is a corollary of Theorem \ref{BUT}\,(ii).
\end{proof}

\vspace*{-1mm}
This theorem  can in turn be exemplified for 
$n=2$  by recognizing that, say on a map of Lazio,
placed on the table of the lecture hall at Villa Mondragone, there must be some point lying directly over the point that it represents.
Instead the case $n=3$ is usually illustrated by stirring a cup of coffee.

The Brouwer fixed-point theorem has lot of important applications too, and is also known to be 
equivalent to Knaster-Kuratowski-Mazurkiewicz lemma (for coverings), or to combinatorial Sperner's lemma
(with a following Fair Division result).

\section{Generalizations}

There have been numerous generalizations and strengthenings of the Borsuk-Ulam theorem,
see e.g. the comprehensive survey 
\cite{s-h85}, with almost 500 references before 1985.
For some more recent generalizations regarding the dimension of the coincidence set of the maps $f$ or $g$ for more general manifolds, or for homology spheres, and equivariance under other groups see e.g. 
\cite{t-ya07} and \cite{or-m}, and references therein.
Here we shall briefly present few generalizations,
whose noncommutative analogues could be most accessible, in our opinion.

\subsection{Going beyond spheres}

The version (0) of the Borsuk-Ulam Theorem  \ref{BUT}
employs a linear structure, and is not clear how it generalizes to spaces more general than spheres.
Instead the versions (i) and (ii) can indeed be generalized as follows.

For that view $S^{n}$ as the non reduced suspension $\Sigma S^{n-1}$ of $S^{n-1}$,
i.e. the quotient of $[0,1]\times S^{n-1}$
by the equivalence relation $R_\Sigma$ generated by
\begin{align}
&(0,x)\sim (0,x'),\qquad (1,x)\sim (1,x').
\end{align}
In this homeomorphic realization the $\z2$-action
becomes 
\[
(t,x)\mapsto (1-t,-x).
\]
Furthermore notice that the ball $B^n$ is homeomorphic to the cone $\Gamma S^{n-1}$ of $S^{n-1}$,
i.e. the quotient of $[0,\half]\times S^{n-1}$
by the equivalence relation $R_\Gamma$ generated by
\begin{align}
&(0,x)\sim (0,x').
\end{align}

\begin{proposition}\label{xBUT}
Let $X$ be  a compact space $X$ of finite covering dimension with a free $\z2$-action given by an involution $\sigma$.
\item (i).
There is no $\z2$-equivariant continuous  map ${f}\colon 
\Sigma X \to X $,
where the $\z2$-action on $\Sigma X$ is given by the involution
\[
(t,x)\mapsto (1-t,\sigma(x)),
\]
\item (ii). 
There is no continuous map $g : \Gamma X \to X$ that is $\z2$-equivariant 
on $\partial\, (\Gamma X) = X$.
\item
The conditions (i) and (ii) are equivalent.
$\diamond$
\end{proposition}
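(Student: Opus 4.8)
The plan is to prove the equivalence of (i) and (ii) in Proposition \ref{xBUT} by establishing the contrapositive in each direction: a counterexample to one statement produces a counterexample to the other. This mirrors the classical argument for the equivalence of parts (i) and (ii) of Theorem \ref{BUT}, but now carried out in the suspension/cone language so that it applies verbatim to a general compact free $\z2$-space $X$. The key geometric observation is that $\Sigma X$ is obtained by gluing two copies of $\Gamma X$ along their common boundary $X$, and the free involution on $\Sigma X$ interchanges these two cones (sending the parameter $t$ to $1-t$) while acting as $\sigma$ on the shared boundary.

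First I would prove (ii)$\Rightarrow$(i): suppose, for contradiction, that a $\z2$-equivariant continuous map $f\colon \Sigma X\to X$ exists. Restricting $f$ to the lower cone $\{(t,x): t\in[0,\half]\}\subset \Sigma X$, which is homeomorphic to $\Gamma X$ with boundary $X=\{t=\half\}$, gives a continuous map $g\colon \Gamma X\to X$. Its restriction to $\partial(\Gamma X)=X$ is $x\mapsto f(\half, x)$, and since the involution on $\Sigma X$ fixes the slice $t=\half$ setwise while acting by $\sigma$ there, the equivariance of $f$ forces $f(\half,\sigma(x)) = \sigma(f(\half,x))$, i.e. $g$ is $\z2$-equivariant on the boundary. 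This contradicts (ii). Conversely, for (i)$\Rightarrow$(ii), suppose $g\colon \Gamma X\to X$ is continuous and $\z2$-equivariant on $\partial(\Gamma X)=X$. I would build $f\colon \Sigma X\to X$ by setting $f(t,x) = g(t,x)$ for $t\in[0,\half]$ and $f(t,x) = \sigma\big(g(1-t,\sigma(x))\big)$ for $t\in[\half,1]$; the two formulas agree on the overlap $t=\half$ precisely because $g$ is equivariant on the boundary there, so $f$ is well-defined and continuous, and a direct check shows $f$ intertwines the involution $(t,x)\mapsto(1-t,\sigma(x))$ on $\Sigma X$ with $\sigma$ on $X$. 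This contradicts (i).

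The main point requiring care — and the step I expect to be the principal obstacle — is verifying that the map $f$ defined piecewise on $\Sigma X$ genuinely descends to the quotient, i.e. that it respects the suspension relation $R_\Sigma$ collapsing $\{0\}\times X$ and $\{1\}\times X$ to points. Here one must check that $g$ is automatically constant on $\{0\}\times X$ because $\Gamma X$ already collapses that slice via $R_\Gamma$, so $f$ is constant on $\{0\}\times X\subset\Sigma X$; and on $\{1\}\times X$ the value is $\sigma(g(0,\sigma(x)))$, again constant. One should also confirm the involution on $\Sigma X$ is well-defined on the quotient (it swaps the two collapsed points) and free — this is where the hypothesis that $\sigma$ is free on $X$ is used, since a fixed point $(t,x)=(1-t,\sigma(x))$ would force $t=\half$ and $\sigma(x)=x$. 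The finite covering dimension hypothesis is not needed for this equivalence itself; it is relevant only to the (conjectural) statements that (i) and (ii) actually hold, not to their mutual equivalence, so I would remark that the equivalence holds for any compact free $\z2$-space. Everything else is a routine continuity-and-gluing verification using the pasting lemma for closed covers.
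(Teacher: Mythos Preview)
Your proposal is correct and follows essentially the same approach as the paper: restrict $f$ to the lower cone $\Gamma X\hookrightarrow\Sigma X$ to obtain $g$, and conversely glue $g$ with its $\sigma$-twisted reflection to build $f$. Your write-up is in fact more careful than the paper's, which omits the quotient-compatibility checks you spell out, and the paper likewise remarks immediately after the proof that the finite covering dimension hypothesis is not needed for the equivalence.
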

\begin{proof}
The statement (i) follows from an observation 
in \cite{y-m13} (Remark 17).
We show the equivalence of (the negations of) (i) and (ii).\\
Let $i: \Gamma X \to \Sigma X$
be the canonical injection.
Given $\z2$-equivariant $f:\Sigma X\to X$, 
$g:= f\circ i: \Gamma X \to X$  is 
$\z2$-equivariant on $\partial\, (\Gamma X) = X$.\\
Viceversa, given $g$ such that $g|_X$ is $\z2$-equivariant, $f$ defined by
$$
f(t,x)= 
\left\{ 
\begin{array}{ll}
g(t,x), &\mbox{if}\;\;  ~t\in[0,\half];\\
{}&{}\\
\sigma\big(g(1-t,\sigma(x)\big), &\mbox{if}\;\;  ~t\in[\half, 1],
\end{array}
\right.
$$ 
is continuous and $\z2$-equivariant.
\end{proof}
We remark that the statements (i) and (ii) are equivalent without the assumption that $X$ has finite covering dimension. 
It is interesting to know if it  
can be avoided in the Proposition \ref{xBUT} 
as a special case of the conjecture in \cite{BDH}, 
or rather some assumptions should be added to the latter one.

Concerning the Brouwer theorem, its usual version Theorem\ref{Brouwer}\,(I)  
has various important generalizations (in particular to certain topological linear spaces). 
We propose a conjecture about
extensions to more general spaces 
that may lack an underlying linear structure: 
\begin{conjecture}\label{xBrouwer}
If $X$ is a compact space $X$ of finite covering dimension with a free action of $\z2$,
then a continuos map from $\Gamma X$ to itself has a fixed point. $\diamond$
\end{conjecture}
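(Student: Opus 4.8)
The plan is to argue by contradiction and funnel the hypothesis into Proposition~\ref{xBUT}\,(ii). Suppose $h\colon\Gamma X\to\Gamma X$ is continuous with $h(p)\neq p$ for all $p\in\Gamma X$. One would like to manufacture from $h$ a continuous retraction $\Gamma X\to X$ (or, more generously, any continuous $g\colon\Gamma X\to X$ whose restriction to $X=\partial(\Gamma X)$ is $\z2$-equivariant), which is impossible by Proposition~\ref{xBUT}\,(ii). In the classical case $\Gamma S^{n-1}=B^{n}$ one takes for $g$ the point where the ray issued from $h(p)$ through $p$ meets $S^{n-1}$; this uses the linear structure of $\R^{n}$, and the resulting $g$ is a retraction onto $S^{n-1}$, hence restricts to the identity --- in particular to a $\z2$-equivariant map --- on the boundary.

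It is worth isolating the elementary part of the statement. If, in addition, $X$ is a compact ANR, then $\Gamma X$ --- the mapping cylinder of $X\to\mathrm{pt}$ --- is again a compact ANR, and being contractible it is a compact AR. Realizing $\Gamma X$ as a compact subset of a Euclidean ball $B^{M}$ (embed $X$ in some $\R^{N}$ and take the geometric cone in $\R^{N+1}$), the AR property makes $\Gamma X$ a retract of $B^{M}$; since a retract of a space with the fixed-point property again has it, Brouwer's theorem for $B^{M}$ applies. The $\z2$-action played no role here, so the content of the Conjecture is entirely in the non-ANR case, where the free action must be essential: a contractible finite-dimensional compactum without the fixed-point property --- such as Kinoshita's continuum --- is necessarily not an ANR.

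For general $X$ the ray construction collapses. Writing points of $\Gamma X$ as $[t,x]$ with $t\in[0,\half]$ and cone point $*=[0,x]$, the radial projection $\pi\colon\Gamma X\setminus\{*\}\to X$, $[t,x]\mapsto x$, is continuous and fixes $X$ pointwise; when $h(p)$ and $p$ lie on a common ray $\{[s,x]:s\in[0,\half]\}$ one can slide $p$ toward $[\half,x]=x\in X$, but when they lie on different rays, and a fortiori when $h(p)=*$, there is no intrinsic ``direction away from $h(p)$'' to retract along. Embedding $\Gamma X$ in a ball $B^{M}$ as above does not help either: Brouwer would yield a fixed point of $h$ if $h$ extended to a map $\widetilde h\colon B^{M}\to\Gamma X$, but such an extension exists precisely when $\Gamma X$ is an AR --- the case already handled.

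I therefore expect the decisive step, and the main obstacle, to be an argument that feeds the free $\z2$-action into Proposition~\ref{xBUT} without invoking convexity or an AR hypothesis --- say an obstruction-theoretic computation in \v{C}ech cohomology controlling the characteristic class $w\in H^{1}(X/\z2;\z2)$ of the double cover $X\to X/\z2$ and the nonvanishing of its powers, in the spirit of the cohomological proofs of the Borsuk--Ulam theorem, so as to convert a hypothetical fixed-point-free $h$ into a $\z2$-equivariant map $\Sigma X\to X$ contradicting Proposition~\ref{xBUT}\,(i); or, alternatively, an approximation reducing $X$ to an inverse limit of finite complexes (where the statement is a triviality), the obstacle then being that the induced actions on the approximating complexes need not be free and the relevant invariant may degenerate in the limit. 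Making either route work for arbitrary finite-dimensional compacta --- in particular relating the cohomology of $\Gamma X$ to that of $X$ with no local niceness available --- is the heart of the difficulty, and is plausibly intertwined with the question, raised after Proposition~\ref{xBUT}, of whether the finite-covering-dimension hypothesis there falls under the conjecture of \cite{BDH}.
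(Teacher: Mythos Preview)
The statement you are addressing is a \emph{conjecture} in the paper, and the paper does not prove it. Immediately after stating it, the author writes only that ``it is not clear however how this conjecture could follow from Proposition~\ref{xBUT}\,(ii), since the linear structure of $\R^{n+1}$ present in the proof for $S^n$ is missing,'' and then retreats to the weaker Proposition~\ref{xBrouwer2}. So there is no proof in the paper to compare against; your write-up is appropriately labeled ``Towards a proof'' and, like the paper, stops short of an argument.

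That said, your discussion goes well beyond the paper's one-line remark, and in useful directions. You correctly isolate the same obstruction the author names --- the ray construction needs convexity --- and you make it precise by analyzing what fails on $\Gamma X\setminus\{*\}$. Your observation that the conjecture is elementary when $X$ is a compact ANR (since $\Gamma X$ is then a compact AR, hence a retract of a ball, hence has the fixed-point property) is correct and is \emph{not} in the paper; it sharpens the conjecture by showing that the free $\z2$-action is irrelevant in the locally nice case, so the genuine content lies with pathological $X$. Invoking Kinoshita's contractible continuum without the fixed-point property is apt: it shows contractibility of $\Gamma X$ alone cannot carry the day. Your proposed routes --- an obstruction-theoretic argument via the class $w\in H^1(X/\z2;\z2)$, or an inverse-limit reduction to finite complexes --- are reasonable speculations, and you are honest that neither is worked out.

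In short: no gap to name, because you do not claim a proof; your diagnosis of the obstacle agrees with the paper's, and your ANR observation and the Kinoshita remark are genuine additions that the paper lacks.
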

It is not clear however how this conjecture could follow from Proposition \ref{xBUT}\,(ii), since the linear structure of $\R^{n+1}$ present in the proof for $S^n$ is missing. 
Instead, we have the following generalization of the retract version Theorem\ref{Brouwer}\,(II) of the Brouwer theorem, which is a direct corollary of Proposition \ref{xBUT}\,(ii):
\begin{proposition}
\label{xBrouwer2}
If $X$ is a compact space of finite covering dimension with a free action of $\z2$,
then there is no continuous map $g : \Gamma X \to X$ that is the
identity on the boundary $\partial (\Gamma X) = X$.
$\diamond$
\end{proposition}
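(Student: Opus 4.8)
The plan is to deduce Proposition \ref{xBrouwer2} from Proposition \ref{xBUT}\,(ii) by a contrapositive argument. Suppose, for contradiction, that there exists a continuous map $g\colon \Gamma X\to X$ with $g|_{\partial(\Gamma X)}=\id_X$. The key observation is that the identity map $\id_X$ is automatically $\z2$-equivariant: indeed $\id_X\circ\,\sigma=\sigma\circ\id_X$ trivially. Hence $g$ is a continuous map $\Gamma X\to X$ which is $\z2$-equivariant on $\partial(\Gamma X)=X$, and this is exactly the situation excluded by Proposition \ref{xBUT}\,(ii). This contradiction proves the claim.

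First I would state the setup and the assumption $g|_X=\id_X$. Then I would record the elementary fact that $\id_X$ commutes with the free involution $\sigma$, so that the hypothesis "$g$ is the identity on $\partial(\Gamma X)$" is a special case of the hypothesis "$g$ is $\z2$-equivariant on $\partial(\Gamma X)$". Finally I would invoke Proposition \ref{xBUT}\,(ii)---whose hypotheses (that $X$ is a compact space of finite covering dimension with a free $\z2$-action) coincide verbatim with those of Proposition \ref{xBrouwer2}---to conclude that no such $g$ can exist. The finite-covering-dimension and freeness hypotheses are simply inherited and passed along; no further work is needed.

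There is essentially no obstacle here: the statement is a direct corollary, and the only thing to verify is the triviality that the identity is equivariant, together with the bookkeeping that the hypotheses match. If one wanted a slightly more self-contained exposition one could instead argue directly: given such a $g$, form $f\colon\Sigma X\to X$ by the same gluing formula as in the proof of Proposition \ref{xBUT}, namely $f(t,x)=g(t,x)$ for $t\in[0,\half]$ and $f(t,x)=\sigma\big(g(1-t,\sigma(x))\big)$ for $t\in[\half,1]$; since $g|_X=\id_X$ the two definitions agree on the overlap $t=\half$, and $f$ is continuous and $\z2$-equivariant, contradicting Proposition \ref{xBUT}\,(i). But the cleanest route is simply to note that Proposition \ref{xBrouwer2} is the restriction of Proposition \ref{xBUT}\,(ii) to the case where the equivariant boundary map is the identity, so it follows immediately.
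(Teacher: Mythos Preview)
Your argument is correct and matches the paper's approach: the paper states the proposition as a direct corollary of Proposition~\ref{xBUT}\,(ii), and your observation that $\id_X$ is trivially $\z2$-equivariant is exactly what makes it so. The alternative self-contained argument via Proposition~\ref{xBUT}\,(i) that you sketch is also fine but unnecessary.
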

\noindent
This proposition is a weaker statement than Conjecture \ref{xBrouwer}. Indeed if there was a map $g$ as above,
then $\sigma\circ g$ would not have fixed points.
It is interesting to know if the implication holds also the other way, or to find a counterexample.

\subsection{Going beyond  $\z2$.}

Among generalizations to other groups $G$, whose 
noncommutative analogues in our opinion might be most accessible, are certain necessary homological conditions for the existence of a $G$-equivariant map $f:S(W) \to S(V)$ of spheres in the vector spaces $W, V$  of orthogonal representations
of a compact Lie group $G$ \cite{m-w94} and of 
Hausdorff, pathwise connected and paracompact  free $G$-spaces \cite{bm06}.

We should mention that in \cite{BDH} a conjecture 
regarding a wide class of 
generalizations of the Borsuk-Ulam theorem 
was proposed in terms of \emph{joins}
of a compact Hausdorff space $X$ equipped with a continuous free action of a non-trivial compact Hausdorff group~$G$. 
Particular cases of that conjecture going beyond the Borsuk-Ulam Theorem have already been studied in \cite{s-e40,g-h37,g-h43}
(see also~\cite{t-a12} for weaker results for non-free  
$\mathbb{Z}_p$-actions and maps from $X$ to~$S^1$), and moreover it holds true for the case $X=G$.

We close this section by a remark that finding other examples and conjectures on pairs of $G$-spaces $X$, $Y$ that admit a  $G$-equivariant  injective map $X\to Y$ but no $G$-equivariant maps $Y\to X$,
can be helpful for study of both the spaces $X$ and $Y$, and this applies to noncommutative spaces too.

\section{Noncommutative framework}

\subsection{Borsuk-Ulam theorem for quantum spheres}

The idea is to apply the Gelfand transform
and then to substitute commutative algebras of functions by suitable noncommutative algebras. 
But 'quantization may remove degeneration' and 
the three versions of Theorem~\ref{BUT} may behave
differently in this process. 
The usual version (0) formulated in terms of points is problematic, so we focus on the versions (i) and (ii).
Applying the Gelfand transform, Theorem~\ref{BUT}\,(i) translates to:
\[\label{classphere}
\text{There is no}
\;\;\text{$\z2$-equivariant $*$-homomorphism}\; 
\phi:C(S^n)\longrightarrow C(S^{n+1}) \, .
\]
The next step will be to investigate for which 
noncommutative C*-algebras $A$ and $B$ of 
suitable quantum spheres, \eqref{classphere}
is satisfied with $C(S^{n})$ and $C(S^{n+1})$ substituted by $A$ and $B$. 
It holds for a family of $q$-deformed quantum spheres $S_q^n$ introduced 
in~\cite{p-p87,vs90,hs03}, (see also \cite{hl04} for isomorphic euclidean quantum spheres). 
This is shown in \cite{bhr} (see also \cite{BDH})
for $A=C(S_q^1)=C(S^1)$ and $B=S_q^2$,
(the equatorial Podle\'s quantum two-sphere),
while it is proven in~\cite[Theorem 3]{y-m13} for $A=C(S_q^n)$ and $B=C(S_q^{n+1})$ with arbitrary $n$.

A quantum analogue of \eqref{classphere} also holds 
\cite{p-b} 
for the family of $\theta$-deformed speres $A=C(S_\theta^n)$ and $B=C(S_\theta^{n+1})$ with arbitrary $n$, where $S_\theta^{n}$ for odd $n$ have been defined in \cite{no97} ($n=3$ is a special case 
of \cite{k-m91}), and for even $n$ in \cite{cl01}.

These results follow from a detailed study 
of a stringent invariance under the deformation quantization in the equivariant KK-theory (respectively K-theory). 
Actually it should be interesting to study also the semiclassical limit of these deformations,
maps between them, and the behaviour of the relevant Poisson structures.

We expect that \eqref{classphere} works also for other families of quantum spheres. 
But both for $S_q^n$ and for $S_\theta^{n}$, $n$ odd,  
it is not clear in general how to formulate the quantum analogue of Theorem  \ref{BUT}\,(ii) and of 
Theorem \ref{Brouwer}\,(II)
without introducing first corresponding quantum balls. 
Therefore I will present an alternative proposal.

\subsection{Borsuk-Ulam and Brouwer conjectures for C*-algebras with free $\z2$-actions}

For any $C^*$-algebra $A$ let 
\begin{equation}
\Sigma A:= 
\left\{\alpha\in C([0,1], A) \, \Big|\, \mathrm{ev}_0 (\alpha) \in \C,\, \mathrm{ev}_1 (\alpha) \in  \C \right\},
\end{equation}
where $\mathrm{ev}_t$ is the evaluation at $t$,
be its suspension $C^*$-algebra.
Obviously, $C([0,1], A)$ is isomorphic to $C([0,1]) \otimes A$ and $\Sigma C(X)$ is isomorphic to $C(\Sigma X)$
for any compact topological space $X$.
An involution $\sigma$ on $A$ induces the involution $\sigma_\Sigma$ of  $\Sigma A$, given by 
$$\big(\sigma_\Sigma (\alpha)\big)(t) = 
\sigma \big( \alpha(1-t)\big).$$
%
If the $\z2$ action generated by $\sigma$ is free,
then the $\z2$ action generated by $\sigma_\Sigma$  
is also free.

\begin{conjecture}\label{conjqsphere}
For a unital $C^*$-algebra $A$ with a free action of 
$\z2$,
\[\label{qsphere}
\text{there is no}
\;\;\text{$\z2$-equivariant 
$*$-homomorphism}\; \phi:A\longrightarrow \Sigma A.
\quad\diamond
\]
\end{conjecture}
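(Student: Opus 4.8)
The plan is to reduce Conjecture \ref{conjqsphere} to the known classical statement via a "dualization + freeness" argument, mirroring how Proposition \ref{xBUT}\,(i) was deduced from \cite{y-m13} in the commutative setting. First I would recall that a free $\z2$-action on a unital C*-algebra $A$ means precisely that the crossed product $A\rtimes\z2$ is projective as a module, equivalently (by a theorem of Phillips, and of Rieffel in the commutative case) that the fixed-point subalgebra $A^{\z2}$ sits inside $A$ in a way dual to a principal bundle; concretely one works with the algebraic characterization that the ideal generated by $\{a - \sigma(a) : a\in A\}$ together with the $\z2$-grading is all of $A$, or the Ellwood/Baum-De Commer-Hajac formulation that the map $A\otimes A \to A\otimes C(\z2)$, $a\otimes b \mapsto (a\otimes 1)(\mathrm{id}\otimes\sigma)(b\otimes 1)$ has dense range. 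The point of fixing a precise notion is that freeness must be the hypothesis that obstructs the $\z2$-equivariant $*$-homomorphism $\phi\colon A\to\Sigma A$, exactly as the antipodal action being free is what makes the Borsuk-Ulam theorem true.

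The key steps, in order, would be: (1) Suppose toward a contradiction that a $\z2$-equivariant unital $*$-homomorphism $\phi\colon A\to\Sigma A$ exists. Composing with the evaluation $\mathrm{ev}_t\colon\Sigma A\to A$ gives a path of $*$-homomorphisms $\phi_t\colon A\to A$ with $\phi_0,\phi_1$ landing in $\C\subseteq A$, i.e. characters composed with the unit, and with the equivariance forcing $\phi_t\circ\sigma = \sigma\circ\phi_{1-t}$; in particular at $t=\tfrac12$ we get $\phi_{1/2}\circ\sigma=\sigma\circ\phi_{1/2}$, so $\phi_{1/2}$ is itself $\z2$-equivariant. (2) Reinterpret the whole family $\phi$ as a $\z2$-equivariant $*$-homomorphism from the \emph{cone} $\Gamma A$ (defined analogously with $[0,\tfrac12]$ and one endpoint collapsed to $\C$) into $A$ that is $\z2$-equivariant on the boundary copy of $A$ — this is the exact C*-algebraic transcription of the equivalence (i)$\Leftrightarrow$(ii) in Proposition \ref{xBUT}, and the gluing formula there dualizes verbatim. (3) Derive the contradiction from the noncommutative analogue of Proposition \ref{xBrouwer2}: a $\z2$-equivariant unital $*$-homomorphism $A\to\Gamma A$ that restricts to the identity on the boundary cannot exist when the action is free. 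Step (3) is where one invokes the K-theoretic / index-theoretic input: the class of the "Borsuk-Ulam obstruction" is a nonzero element (the generator of the relevant group, e.g. $KK^{\z2}$ or the class of the crossed product) that would have to be killed by such a homomorphism, and freeness guarantees it is not zero.

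The main obstacle I anticipate is precisely step (3) in full generality: for arbitrary unital $A$ with a free $\z2$-action there is no single cohomological invariant known to detect all such obstructions — the classical proof ultimately rests on the fact that $H^*(S^n;\zz)$ behaves like a truncated polynomial ring and the antipodal map acts nontrivially, and no purely C*-algebraic replacement for "the cohomology of $A$ is a truncated polynomial ring of the right length" is available at this level of generality. This is exactly why the statement is posed as a \emph{conjecture}: the known affirmative cases ($A=C(S_q^n)$, $A=C(S_\theta^n)$, and $A=C(G)=\C\rtimes G$-type examples from \cite{BDH,bhr,y-m13,p-b}) each exploit special structure — an equivariant K-theory or KK-theory computation stable under deformation — rather than a uniform argument. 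So the honest plan is: prove the reduction steps (1)–(2) cleanly and in full generality (these are formal and should go through), isolate the precise obstruction class in step (3), verify it reduces to the classical Borsuk-Ulam obstruction when $A=C(X)$ (recovering Proposition \ref{xBUT}), and then state that the vanishing of this class for all free $\z2$-C*-algebras is the content that remains open, flagging it as equivalent to — or a consequence of — the Baum-De Commer-Hajac conjecture of \cite{BDH} specialized to $G=\z2$.
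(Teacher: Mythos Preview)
The statement you are trying to prove is labelled a \emph{Conjecture} in the paper, and the paper does not prove it. What the paper does do is show (Proposition~\ref{propA}) that Conjecture~\ref{conjqsphere} is equivalent to Conjecture~\ref{conjqball}, the cone formulation. Your steps (1)--(2) are essentially a rediscovery of that equivalence argument: passing from $\phi:A\to\Sigma A$ to $\gamma:A\to\Gamma A$ via restriction to $[0,\tfrac12]$, and back via the gluing formula. (Incidentally, in step (2) you write ``from the cone $\Gamma A$ \dots\ into $A$'', which is the wrong direction; the dual of $g:\Gamma X\to X$ is a $*$-homomorphism $A\to\Gamma A$, as the paper has it.) So the formal part of your plan matches the paper exactly, and neither you nor the paper gets further than that.

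Your step (3), however, contains a genuine logical gap beyond the admitted openness. You propose to derive the contradiction from ``the noncommutative analogue of Proposition~\ref{xBrouwer2}: a $\z2$-equivariant unital $*$-homomorphism $A\to\Gamma A$ that restricts to the identity on the boundary cannot exist''. But the map you obtain from step (2) is only $\z2$-equivariant on the boundary, not the \emph{identity} on the boundary; and the Brouwer-type statement (Conjecture~\ref{conjborsuk}) is a \emph{corollary} of the Borsuk-Ulam-type statement (Conjecture~\ref{conjqball}), not an input to it. The implication runs the wrong way: ruling out boundary-identity maps does not rule out boundary-equivariant maps. In the classical setting this is exactly the distinction between Theorem~\ref{Brouwer}\,(II) and Theorem~\ref{BUT}\,(ii), and the paper is explicit that the former follows from the latter, not conversely. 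So even granting a noncommutative Brouwer theorem, your reduction would not close. You are right, though, that the honest residue is an obstruction-class statement whose general vanishing is open and is essentially the $\z2$-specialization of the conjecture in~\cite{BDH}; that is also where the paper leaves it.
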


Note that the special case when $A=S_\theta^{n}$, $n$ even,
make this conjecture true, but $A=S_\theta^{n}$, $n$ odd
or $A=C(S_q^n)$ (unless the relevant deformation parameter is $\theta=0$ or $q=1$), is different from the results of \cite{y-m13} or \cite{p-b} - and thus not covered by them, since then the suspension of the quantum $n$-sphere is not isomorphic to the quantum 
$(n+1)$-sphere.
 Perhaps however a more "noncommutative" notion of suspension could be introduced (with a noncentral parameter $t$), which would encompass these families as such iterated suspensions of $S^1$
 (see e.g. \cite{hs08} and references therein).

Next we discuss the dualization of Theorem~\ref{BUT}\,(ii).
Using the homeomorphism $B^n=\Gamma S^{n-1}$,
it translates via the Gelfand transform to:
\[\label{classball}
\text{There is no}
\;\;\text{$\z2$-equivariant $*$-homomorphism}\; 
C(S^{n-1})\longrightarrow C(\Gamma S^{n-1}) \, .
\]
Replacing the commutative C*-algebras of functions on the sphere $S^{n-1}$ by noncommutative C*-algebra $A$ 
of a quantum sphere, and properly dualizing the maps,
we obtain a noncommutative version of \eqref{classball}:
\begin{conjecture}\label{conjqball}
For a unital $C^*$-algebra $A$ with a free action of 
$\z2$,
\[\label{qball}
\text{there is no $\z2$-equivariant
$*$-homomorphism\, $\gamma:A\longrightarrow \Gamma A$.}
\quad\diamond
\]
\end{conjecture}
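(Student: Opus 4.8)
The plan is to dualise the classical equivalence of (i) and (ii) in Proposition~\ref{xBUT}, thereby reducing the above conjecture to Conjecture~\ref{conjqsphere}: I would show that a $\z2$-equivariant $*$-homomorphism $\gamma:A\to\Gamma A$, were it to exist, could be glued into a $\z2$-equivariant $*$-homomorphism $A\to\Sigma A$. First fix the cone $C^*$-algebra $\Gamma A:=\{\beta\in C([0,\half],A)\mid \mathrm{ev}_0(\beta)\in\C\}$, with boundary restriction $\mathrm{ev}_{\half}:\Gamma A\to A$; here "$\z2$-equivariant" in \eqref{qball} means that the composite $\mathrm{ev}_{\half}\circ\gamma$ intertwines $\sigma$ on $A$ with $\sigma$ on $A$, just as $g|_X$ is required to be equivariant in Proposition~\ref{xBUT}\,(ii). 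The structural observation I need --- the dual of gluing two cones into a suspension --- is that splitting $[0,1]$ at $t=\half$ and reparametrising the second half by $s=1-t$ yields a $*$-isomorphism $\Sigma A\cong\{(\beta_1,\beta_2)\in\Gamma A\oplus\Gamma A\mid \mathrm{ev}_{\half}(\beta_1)=\mathrm{ev}_{\half}(\beta_2)\}$, under which the involution $\sigma_\Sigma$ becomes $(\beta_1,\beta_2)\mapsto(\sigma\circ\beta_2,\,\sigma\circ\beta_1)$: apply $\sigma$ pointwise and swap the two factors.

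Given such a $\gamma$, I would then define $\phi:A\to\Sigma A$ by $\phi(a):=\big(\gamma(a),\,\sigma\circ\gamma(\sigma(a))\big)$, where $\sigma$ acts pointwise in the second slot, and check three routine points: (a) $\phi(a)$ lands in the fibre product, since $\mathrm{ev}_{\half}(\sigma\circ\gamma(\sigma(a)))=\sigma\big(\sigma(\mathrm{ev}_{\half}(\gamma(a)))\big)=\mathrm{ev}_{\half}(\gamma(a))$ by the boundary equivariance of $\gamma$ together with $\sigma^2=\id$; (b) $\phi$ is a unital $*$-homomorphism, being the map into a $C^*$-subalgebra of $\Gamma A\oplus\Gamma A$ induced by the two $*$-homomorphisms $\gamma$ and $a\mapsto\sigma\circ\gamma(\sigma(a))$; (c) $\phi$ is $\z2$-equivariant, since $\phi(\sigma(a))=\big(\gamma(\sigma(a)),\,\sigma\circ\gamma(a)\big)=\sigma_\Sigma(\phi(a))$. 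Such a $\phi$ would contradict Conjecture~\ref{conjqsphere}. (Conversely, composing an equivariant $\phi:A\to\Sigma A$ with the restriction homomorphism $\Sigma A\to\Gamma A$, $\alpha\mapsto\alpha|_{[0,\half]}$, recovers a boundary-equivariant $\gamma$, so in fact the above conjecture and Conjecture~\ref{conjqsphere} are equivalent, exactly as (i) and (ii) are in Proposition~\ref{xBUT}.)

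All of the above is a routine diagram chase; the real obstacle is Conjecture~\ref{conjqsphere} itself, which is the genuine noncommutative shadow of the Borsuk-Ulam theorem and is presently known only in isolated cases --- the $\theta$-deformed even spheres, or those $A$ whose suspension happens to be a quantum sphere already covered by \cite{y-m13} or \cite{p-b}. Attacking it for a general unital $A$ with free $\z2$-action would require producing an invariant that plays the role of the $\z2$-equivariant degree: most plausibly a rigidity statement in $\z2$-equivariant $KK$-theory, or a $K$-theoretic obstruction, forbidding an equivariant unital $*$-homomorphism $A\to\Sigma A$, in the spirit of the invariance-under-deformation arguments quoted above. Pinning down the correct invariant --- and deciding, in parallel with the finite-covering-dimension discussion following Proposition~\ref{xBUT} and with the conjecture of \cite{BDH}, whether some additional hypothesis on $A$ is needed --- is the substantive step.
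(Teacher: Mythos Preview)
Your reduction is correct and matches the paper's own treatment: the statement is a \emph{conjecture}, not a theorem, and the paper does not prove it but instead establishes exactly the equivalence with Conjecture~\ref{conjqsphere} that you outline (this is Proposition~\ref{propA}). Your fibre-product description of $\Sigma A$ and the map $\phi(a)=(\gamma(a),\,\sigma\circ\gamma(\sigma(a)))$ is a cosmetic repackaging of the paper's piecewise formula $(\phi(a))(t)=(\gamma(a))(t)$ on $[0,\half]$ and $(\phi(a))(t)=(\sigma_\Sigma\circ\gamma\circ\sigma(a))(t)$ on $[\half,1]$; unwinding the reparametrisation $s=1-t$ shows the two are identical, and your verification of well-definedness, the $*$-homomorphism property, and $\sigma_\Sigma$-equivariance is the same check. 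You are also right that the genuine content---Conjecture~\ref{conjqsphere}---remains open in general.
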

Here 
\[
\Gamma A := 
\left\{\alpha\in C([0,\half ],A) \, \Big|\, \mathrm{ev}_0 (\alpha) \in \C \right\}
\]
is the cone of $A$ and the $\z2$-equivariance means
\[
\sigma\circ \mathrm{ev}_\frac{1}{2}  \circ\gamma
=\mathrm{ev}_\frac{1}{2}  \circ \gamma\circ\sigma,
\]
where the involution $\sigma$ generates the action 
of $\z2$ on $A$. 
Notice that this is in perfect agreement with the commutative situation since there are canonical isomorphisms $C([0,\half ],A)=C([0, \half ]) \otimes A$ and  $C(\Gamma S^{n-1})=\Gamma (C(S^{n-1}))$,
which are equivariant with respect of the the $\z2$-action on the "boundaries" $A$ and $S^{n-1}$, respectively.

\begin{proposition}\label{propA}
For any unital $C^*$-algebra $A$ with a free action of 
$\z2$, there exists
a $\z2$-equivariant $*$-homomorphism\, $\phi:A\longrightarrow \Sigma A$
if and only if there exists a $\z2$-equivariant
$*$-homomorphism\, $\gamma:A\longrightarrow \Gamma A$.
Thus the  conjectures \ref{conjqsphere} and \ref{conjqball} are equivalent. $\diamond$
\end{proposition}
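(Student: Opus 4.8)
The plan is to dualize the proof of Proposition~\ref{xBUT}, with the canonical injection $i\colon\Gamma X\to\Sigma X$ replaced by the restriction $*$-homomorphism
\[
\rho\colon\Sigma A\longrightarrow\Gamma A,\qquad \rho(\alpha)=\alpha|_{[0,\frac12]},
\]
which is well defined since $\mathrm{ev}_0(\rho(\alpha))=\mathrm{ev}_0(\alpha)\in\C$, and which satisfies $\mathrm{ev}_{1/2}\circ\rho=\mathrm{ev}_{1/2}$ on $\Sigma A$. The only general fact about the involution used, besides $\sigma^2=\mathrm{id}$, is that a $*$-automorphism of a unital $C^*$-algebra fixes the unit, so that $\sigma(\C\cdot 1)=\C\cdot 1$; this is where unitality of $A$ enters.

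For the first implication, suppose $\phi\colon A\to\Sigma A$ is $\z2$-equivariant and put $\gamma:=\rho\circ\phi\colon A\to\Gamma A$. To verify the boundary equivariance $\sigma\circ\mathrm{ev}_{1/2}\circ\gamma=\mathrm{ev}_{1/2}\circ\gamma\circ\sigma$, I would compute $\mathrm{ev}_{1/2}\circ\gamma\circ\sigma=\mathrm{ev}_{1/2}\circ\rho\circ\sigma_\Sigma\circ\phi=\mathrm{ev}_{1/2}\circ\sigma_\Sigma\circ\phi=\sigma\circ\mathrm{ev}_{1/2}\circ\phi=\sigma\circ\mathrm{ev}_{1/2}\circ\gamma$, using in turn $\phi\circ\sigma=\sigma_\Sigma\circ\phi$, then $\mathrm{ev}_{1/2}\circ\rho=\mathrm{ev}_{1/2}$, then the identity $\mathrm{ev}_{1/2}\circ\sigma_\Sigma=\sigma\circ\mathrm{ev}_{1/2}$ (immediate from $(\sigma_\Sigma\alpha)(t)=\sigma(\alpha(1-t))$ at $t=\tfrac12$), and finally $\mathrm{ev}_{1/2}\circ\phi=\mathrm{ev}_{1/2}\circ\gamma$.

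For the converse, given a $\z2$-equivariant $\gamma\colon A\to\Gamma A$ I would define $\phi\colon A\to C([0,1],A)$ by the formula dual to the one for $f$ in the proof of Proposition~\ref{xBUT}:
\[
\big(\phi(a)\big)(t)=
\begin{cases}
\big(\gamma(a)\big)(t), & t\in[0,\tfrac12],\\[1mm]
\sigma\Big(\big(\gamma(\sigma(a))\big)(1-t)\Big), & t\in[\tfrac12,1].
\end{cases}
\]
The verification has four short steps. (1) Agreement at $t=\tfrac12$: applying the boundary equivariance of $\gamma$ to $a$ gives $(\gamma(\sigma(a)))(\tfrac12)=\sigma((\gamma(a))(\tfrac12))$, hence $\sigma((\gamma(\sigma(a)))(\tfrac12))=(\gamma(a))(\tfrac12)$, so the two branches match and $\phi(a)\in C([0,1],A)$. (2) $\phi(a)\in\Sigma A$: $\mathrm{ev}_0(\phi(a))=\mathrm{ev}_0(\gamma(a))\in\C$, while $\mathrm{ev}_1(\phi(a))=\sigma(\mathrm{ev}_0(\gamma(\sigma(a))))\in\sigma(\C\cdot 1)=\C\cdot 1$. (3) $\phi$ is a $*$-homomorphism: each branch is a composite of $*$-homomorphisms (namely $\gamma$, the reflection isomorphism $\beta\mapsto(t\mapsto\beta(1-t))$ between $C([0,\tfrac12],A)$ and $C([\tfrac12,1],A)$, pointwise application of $\sigma$, and $\sigma$ itself), and $C([0,1],A)$ is the fibre product of $C([0,\tfrac12],A)$ and $C([\tfrac12,1],A)$ along evaluation at $\tfrac12$, so step~(1) shows the glued map is linear, multiplicative and $*$-preserving; it is then automatically continuous. (4) Equivariance $\phi\circ\sigma=\sigma_\Sigma\circ\phi$: this is checked on $[0,\tfrac12]$ and on $[\tfrac12,1]$ separately, each case reducing to the defining formula after the substitution $t\mapsto 1-t$ and an application of $\sigma^2=\mathrm{id}$.

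I do not anticipate a genuine obstacle: the whole argument is a faithful dualization of Proposition~\ref{xBUT}. The one point deserving attention is step~(2) — that the reflected-and-conjugated branch still has a scalar value at $t=1$ — which is precisely where the unitality hypothesis on $A$ is used, and which has no counterpart in the purely topological statement. Combining the two implications yields the asserted equivalence of Conjectures~\ref{conjqsphere} and~\ref{conjqball}.
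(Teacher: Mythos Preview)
Your proof is correct and follows essentially the same approach as the paper: restriction $\rho$ (the paper's $\iota$) to pass from $\phi$ to $\gamma$, and the same gluing formula (your second branch $\sigma\big((\gamma(\sigma a))(1-t)\big)$ is exactly the paper's $\big(\sigma_\Sigma\circ\gamma\circ\sigma(a)\big)(t)$) to go back. Your verification is in fact more careful than the paper's terse argument, and your remark that unitality is used precisely to ensure $\mathrm{ev}_1(\phi(a))\in\C\cdot 1$ is a welcome clarification not made explicit there.
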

\begin{proof}
Let 
$\iota : \Sigma A \to \Gamma A$ be the restriction to $[0,\half]$.
If $\phi:A\longrightarrow \Sigma A$ is a $\z2$-equivariant  *-homomorphism, then
$\gamma:= \iota\circ \phi, \gamma:A \to \Gamma A$ is 
a $\z2$-equivariant *-homomorphism.\\
If  $\gamma:A \to \Gamma A$ is a $\z2$-equivariant *-homomorphism, then $\phi:A \to \Sigma A$ defined for any $a\in A$ by 
$$\label{acr}
\big(\phi(a)\big)(t)= \left\{ 
\begin{array}{ll}
\big(\gamma(a)\big)(t), &\mbox{if}\;  ~t\in[0,\half];\\
{}&{}\\
\Big(\sigma_\Sigma\circ\gamma\circ\sigma(a)\Big)(t), &\mbox{if}\;  ~t\in[\half, 1],
\end{array}
\right.
$$
is $\z2$-equivariant.
\end{proof}

Concerning the Brouwer fixed-point theorem,
it is not clear not only how to extend its usual version Theorem \ref{Brouwer}\,(I) 
to a more general class of spaces so that 
it would be a corollary of \eqref{qball}, but 
also how to formulate it for a noncommutative unital C*-algebra,  due to the presence 
of a {\em point} (singleton) in its statement.
However, we can offer the following conjecture
for a noncommutative generalization of the retract version (II) of the Brouwer theorem Theorem\ref{Brouwer}, which is an immediate corollary of Conjecture \ref{conjqball}:
\begin{conjecture}\label{conjborsuk}
For a unital $C^*$-algebra $A$ with a free action of $\z2$ there is no 
{$*$-homomorphism}\, $\gamma:A\longrightarrow \Gamma A$, such that $\mathrm{ev}_\frac{1}{2}  \circ \gamma$ is the identity on $A$. $\diamond$
\end{conjecture}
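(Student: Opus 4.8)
The plan is to derive Conjecture \ref{conjborsuk} directly from Conjecture \ref{conjqball}, exactly mirroring the classical implication Theorem \ref{Brouwer}\,(II) $\Leftarrow$ Theorem \ref{BUT}\,(ii). The point is that a $*$-homomorphism $\gamma\colon A\to\Gamma A$ whose evaluation at $\tfrac12$ is the identity on $A$ is, a fortiori, a $*$-homomorphism for which $\mathrm{ev}_{1/2}\circ\gamma$ intertwines $\sigma$ with $\sigma$: since $\mathrm{ev}_{1/2}\circ\gamma=\mathrm{id}_A$ we have $\sigma\circ\mathrm{ev}_{1/2}\circ\gamma=\sigma=\sigma\circ\mathrm{id}_A=\mathrm{id}_A\circ\sigma=\mathrm{ev}_{1/2}\circ\gamma\circ\sigma$. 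Hence such a $\gamma$ would be a $\z2$-equivariant $*$-homomorphism $A\to\Gamma A$ in the sense of Conjecture \ref{conjqball}, and its existence is excluded by that conjecture. So assuming Conjecture \ref{conjqball}, no such $\gamma$ exists, which is precisely Conjecture \ref{conjborsuk}.

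Concretely I would write: first recall the definitions of $\Gamma A$ and of $\z2$-equivariance for maps into $\Gamma A$ from the paragraph following Conjecture \ref{conjqball}, namely that equivariance of $\gamma$ means $\sigma\circ\mathrm{ev}_{1/2}\circ\gamma=\mathrm{ev}_{1/2}\circ\gamma\circ\sigma$. Second, observe that the hypothesis $\mathrm{ev}_{1/2}\circ\gamma=\mathrm{id}_A$ forces this identity automatically, since both sides collapse to $\sigma$. Third, conclude that a counterexample to Conjecture \ref{conjborsuk} would immediately be a counterexample to Conjecture \ref{conjqball}, so that \ref{conjqball} $\Rightarrow$ \ref{conjborsuk}; equivalently, \ref{conjborsuk} is an immediate corollary of \ref{conjqball}. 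One may also phrase this contrapositively as in the proof of Proposition \ref{xBrouwer2}: if a retraction $\gamma$ existed, then $\sigma_\Gamma\circ\gamma$ (where $\sigma_\Gamma$ is the induced involution on $\Gamma A$, acting trivially on the cone coordinate and by $\sigma$ on $A$) would be a fixed-point-free endomorphism-type obstruction — but for the clean statement the equivariance-forcing argument above is the most economical.

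There is essentially no obstacle here, since the statement is explicitly flagged in the excerpt as "an immediate corollary of Conjecture \ref{conjqball}"; the only thing to be careful about is matching conventions, i.e. making sure that the notion of $\z2$-equivariance used in Conjecture \ref{conjqball} is the one involving only $\mathrm{ev}_{1/2}$ (the boundary evaluation), not a pointwise-in-$t$ condition. With that convention — which is the one stated right after Conjecture \ref{conjqball} — the implication is a two-line logical deduction. If one instead wanted Conjecture \ref{conjborsuk} to follow from the putative noncommutative Brouwer fixed-point statement (a hypothetical ``Theorem \ref{Brouwer}\,(I)''-analogue), one would compose $\gamma$ with a flip on the cone coordinate to produce a fixed-point-free self-map of $\Gamma A$ at the level of the commutative model, but since no such noncommutative fixed-point theorem is available, the derivation from Conjecture \ref{conjqball} is the intended and only rigorous route, and it is immediate.
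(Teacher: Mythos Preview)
Your proposal is correct and matches the paper's approach exactly: the paper does not give a separate proof but simply remarks that Conjecture~\ref{conjborsuk} is ``an immediate corollary of Conjecture~\ref{conjqball}'', and your observation that $\mathrm{ev}_{1/2}\circ\gamma=\mathrm{id}_A$ forces the boundary $\z2$-equivariance condition $\sigma\circ\mathrm{ev}_{1/2}\circ\gamma=\mathrm{ev}_{1/2}\circ\gamma\circ\sigma$ (both sides being $\sigma$) is precisely the one-line reason behind that remark. Your caution about checking that equivariance is imposed only at the boundary $t=\tfrac12$ is well placed and consistent with the paper's convention.
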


We mention that the $n$-dimensional quantum $\theta$-balls have been defined for $n$ even in \cite{p-ma13} and they are just isomorphic to $\Gamma S_\theta^{n-1}$. 
Thus Conjecture \ref{conjqball}, Proposition \ref{propA} and Conjecture \ref{conjborsuk} hold true for these families.

\subsection{Borsuk-Ulam conjectures for C*-algebras with free quantum group actions}

We close this note by mentioning that two conjectures about a much wider noncommutative generalization of \eqref{classphere} have been proposed in \cite{BDH}. 
They are formulated within the framework of actions of compact quantum groups on unital
\mbox{C*-algebras} and use the noncommutative analogue 
of the equivariant join as defined in \cite{dhh}.

The first of the conjectures states that, if $H$ is the C*-algebra of a compact quantum group coacting freely on a unital C*-algebra $A$,  
then there is no equivariant $*$-homomorphism from $A$ to the join C*-algebra $A$ with $H$.  
This encompasses as a special case the conjecture \ref{conjqsphere} for $A=C(S_q^n)$, but still does not include the result shown in \cite{y-m13} for the quantum spheres  $A=C(S_q^n)$.
For $A$ being the C*-algebra
of continuous functions on a sphere with the antipodal coaction of 
the C*-algebra of functions on $\mathbb{Z}/2\mathbb{Z}$,
the Theorem \ref{BUT}\,(i) is recovered.

The second conjecture states that there is no equivariant $*$-homomorphism 
from $H$ to the join C*-algebra of $A$ with $H$.  
In \cite{BDH} it is shown how to prove the conjecture in the special case $A=C(SU_q(2))=H$, which
is tantamount to showing the non-trivializability of the quantum instanton principal $SU_q(2)$-bundle \cite{p-m94}.

It is interesting to formulate a 
corresponding noncommutative Brouwer conjectures, 
in particular in such a way that they would follow as corollary.

\section*{Acknowledgements}
\noindent
It is a pleasure to thank P.M.~Hajac for discussions and useful comments. This work was partially supported by PRIN 2010-11 grant `Operator Algebras, Noncommutative Geometry and Applications' and  by grant 2012/06/M/ST1/00169\, `HARMONIA`.

\vspace*{1.25mm}


\begin{thebibliography}{99}
\normalsize

\bibitem{BDH}
P.F.~Baum, L.~D\k abrowski, P.M.~Hajac: 
\emph{Noncommutative Borsuk-Ulam-type conjectures},
Banach Center Publ., in press,
 arXiv:1502.05756.\vspace*{1.25mm}

\bibitem{bhr}
P.F.~Baum, P.M.~Hajac, J.~Rudnik: 
\emph{The K-theory of quantum real projective planes}, in preparation.
\vspace*{1.25mm}

\bibitem{bm06} 
C.~Biasi, D.~de~Mattos: 
\emph{A Borsuk-Ulam theorem for compact Lie group actions}, Bull Braz. Math. Soc.,
New Series 37 no.~1 (2006) 127--137.
\vspace*{1.25mm}

\bibitem{b-k33} 
K.~Borsuk:
\emph{Drei S\"atze \"uber die n-dimensionale euklidische Sph\"are},
Fund. Math. 35 (1933) 177--190.
\vspace*{1.25mm}

\bibitem{cl01} 
A.~Connes, G.~Landi: 
\emph{Noncommutative manifolds, the instanton algebra and isospectral deformations}, Comm. Math. Phys., 221 (2001) 141--159.
 \vspace*{1.25mm}
 
\bibitem{dhh}
L.~D\k{a}browski, T.~Hadfield, P.M.~Hajac:
\emph{Noncommutative join constructions},
 arXiv:1407.6020.
 \vspace*{1.25mm}

\bibitem{s-e40} S.~Eilenberg:
 On a theorem of P. A. Smith concerning fixed points for periodic transformations. 
Duke Math. J. 6 (1940) 428--437.
\vspace*{1.25mm}

\bibitem{hl04} E.~Hawkins, G.~Landi:
\emph{Fredholm modules for quantum Euclidean spheres},
J. Geom. Phys. 49 (2004) 272--293.
\vspace*{1.25mm}

\bibitem{g-h37} G.~Hirsch: 
\emph{Une generalisation d'un theoreme de M. Borsuk concernant certaines transformations de l'analysis situs}, 
Acad. Roy. Belgique Bull. Cl. Sci.~23 (1937) 219--225.
\vspace*{-4mm} 

\bibitem{g-h43} G.~Hirsch:
\emph{Sur des proprietes de representations permutables et des generalisations d'un theoreme de Borsuk}, 
Ann. Sci. Ecole Norm. Sup. 60, 3 (1943) 113--142.\vspace*{1.25mm} 

\bibitem{hs03} J.H.~Hong, W.~Szyma\'nski:
\emph{Quantum lens spaces and graph algebras},
Pacific Journal of Mathematics, 211, no.~2 (2003).
\vspace*{1.25mm}

\bibitem{hs08} J.H.~Hong, W.~Szyma\'nski:
\emph{Noncommutative Balls and Mirror Quantum Spheres}, J. London Math. Soc. 77 (2008) 607--626.
\vspace*{1.25mm}
    
\bibitem{m-w94} W.~Marzantowicz:
\emph{Borsuk-Ulam theorem for any compact Lie group}, J. London Math.Soc. 49 (1994) 195--208.
\vspace*{1.25mm}

\bibitem{m-j03} J.~Matou$\check{s}$ek:  
\emph{Using the Borsuk-Ulam Theorem},
Springer-Verlag Berlin Hielberg 2003.\vspace*{1.25mm}

\bibitem{k-m91} K.~Matsumoto:
\emph{Non-commutative three dimensional spheres}, 
Japan J. Math. 17 (1991) 333-356 \&
\emph{Non-commutative three dimensional spheres II --non-commutative Hopf fibering}, Yokohama Math. J. 38 (1991) 103--111.
\vspace*{1.25mm}


\bibitem{or-m} O.R.~Musin:
\emph{Borsuk-Ulam type theorems for manifolds},
arXiv:0912.0787.\vspace*{1.25mm}

\bibitem{no97} T.~ Natsume, C.L.~Olsen:
\emph{Toeplitz operators on noncommutative spheres and an index theorem},
Indiana Univ. Math. J. 46(4) (1997) 1055--1112.\vspace*{1.25mm}

\bibitem{nss} K.L.~Nyman, J.N.~Scott, F.E.~Su:
\emph{A Borsuk-Ulam equivalent that directly implies Sperner's Lemma},
http://www.willamette.edu/~knyman/papers/Fan\_Sperner.pdf
\vspace*{1.25mm}

\bibitem{p-b} B.~Passer: 
\emph{A Noncommutative Borsuk-Ulam Theorem for Natsume-Olsen Odd Spheres}, arXiv:1503.01822v3.
\vspace*{1.25mm}

\bibitem{p-ma13} M.A.~Peterka: 
\emph{Finitely-generated projective modules over the $\theta$-deformed 4-sphere}, Comm. Math. Phys., 321 (2013) 577--603.
\vspace*{1.25mm}

\bibitem{p-m94} M.~Pflaum:
\emph{Quantum groups on fibre bundles},
Comm. Math. Phys. 166 (1994) 279--315.\vspace*{1.25mm}

\bibitem{p-p87} P.~Podle\'s:
\emph{Quantum spheres}, Lett. Math. Phys. 14 (1987) 521--531.
\vspace*{1.25mm}

\bibitem{s-h85} H.~Steinlein: 
\emph{Borsuk's antipodal theorem and its generalizations and applications: A survey}, in A. Granas ed., M\'ethodes
topologiques en analyse nonlin\'eaire, volume 95 of Colloqu. S\'emin. Math. Super., Semin. Sci. OTAN, pages 166--235, Montr\'eal, 1985. Univ. de Montr\'eal Press.
\vspace*{1.25mm}

\bibitem{t-a12} A.~Taghavi: 
\emph{A Banach algebraic Approach to the Borsuk-Ulam Theorem}, 
Abstract and applied analysis (2012), Article ID 729745.
\vspace*{1.25mm}

\bibitem{t-ya07} Y.A.~Turygin:
\emph{A Borsuk-Ulam theorem for $(Z_p)^k$-actions on products of (mod $p$) homology spheres},
Topology and its Applications 154 no.~2 (2007) 455--461.
\vspace*{1.25mm}

\bibitem{vs90}
L.L.~Vaksman, Ya.S.~Soibelman:
\emph{Algebra of functions on the quantum group ${SU(n+1)}$, and odd-dimensional quantum spheres}, 
Algebra i Analiz 2 no.~5 (1990) 101--120. 
Translation in \emph{Leningrad Math. J.} 2 no.~5 (1991) 1023--1042.
\vspace*{1.25mm}

\bibitem{y-m13} 
M.~Yamashita: 
\emph{Equivariant comparison of quantum homogeneous spaces}, 
Comm. Math. Phys. 317 (2013) 593--614.
\vspace*{1.25mm}
\vspace*{5mm}
\end{thebibliography}
\end{document}